\documentclass{amsart}
\usepackage{verbatim}
\usepackage{paralist}
\usepackage{leftidx}
\usepackage{dsfont}
\usepackage{amsthm}
\usepackage{amsmath}
\usepackage{amssymb}
\usepackage{amscd}
\usepackage{graphicx}
\usepackage{setspace}
\usepackage[all]{xy}
\usepackage{mathrsfs} 
\usepackage{hyperref}
\title{On the Additivity of Newton--Okounkov Bodies}
\author{Robert Wilms}
\address{Universit\'e de Caen Normandie, CNRS, LMNO UMR 6139, F-14000 Caen, France}
\email{\href{mailto:robert.wilms@unicaen.fr}{robert.wilms@unicaen.fr}}
\thanks{The author gratefully acknowledges support from the Swiss National Science Foundation grant ``Diophantine Equations: Special Points, Integrality, and Beyond'' (n$^\circ$ 200020\_184623).}
\subjclass[2010]{14M25, 14C17, 52A39}
\begin{document}
	\numberwithin{equation}{section}
	\newtheorem{Def}{Definition}
	\numberwithin{Def}{section}
	\newtheorem{Rem}[Def]{Remark}
	\newtheorem{Lem}[Def]{Lemma}
	\newtheorem{Que}[Def]{Question}
	\newtheorem{Cor}[Def]{Corollary}
	\newtheorem{Exam}[Def]{Example}
	\newtheorem{Thm}[Def]{Theorem}
	\newtheorem*{clm}{Claim}
	\newtheorem{Pro}[Def]{Proposition}

\begin{abstract}
	We study the additivity of Newton--Okounkov bodies. Our main result states that on two-dimensional subcones of the ample cone the Newton--Okounkov body associated to an appropriate flag acts additively. We prove this by induction relying on the slice formula for Newton--Okounkov bodies. Moreover, we discuss a necessary condition for the additivity showing that our result is optimal in general situations. As an application, we deduce an inequality between intersection numbers of nef line bundles.
\end{abstract}
\maketitle
\section{Introduction}
For any line bundle $L$ on an irreducible projective variety $X$, the associated Newton--Okounkov body is a convex body $\Delta_{Y_\bullet}(L)$ in $\mathbb{R}^{\dim X}$ encoding essential properties of $L$, for example its volume. This object was introduced independently by Lazarsfeld--Mustaţă \cite{LM09} and by Kaveh--Khovanskii \cite{KK12} based on ideas by Okounkov \cite{Oko96,Oko03}.
It depends on the choice of a flag
\begin{align*}
Y_\bullet\colon \qquad X=Y_0\supsetneq Y_1\supsetneq Y_2\supsetneq \dots\supsetneq Y_d=\{p\}
\end{align*}
of irreducible subvarieties, which are regular at $p$, where $d=\dim X$. Such a flag is also called an admissible flag. In general, the Newton--Okounkov body satisfies the inclusion
\begin{align}\label{equ_inclusion}
\Delta_{Y_\bullet}(L)+\Delta_{Y_\bullet}(M)\subseteq \Delta_{Y_\bullet}(L\otimes M),
\end{align}
where we take the Minkowski sum on the left-hand side. This inclusion makes it possible to deduce the Brunn--Minkowski inequality 
$$\mathrm{vol}(L\otimes M)^{1/d}\ge \mathrm{vol}(L)^{1/d}+\mathrm{vol}(M)^{1/d}$$
for big line bundles from its classical analogue in convex geometry. Unfortunately, inclusion (\ref{equ_inclusion}) is not sufficient to translate more involved inequalities of mixed volumes in convex geometry to the algebraic geometry of line bundles. In general, (\ref{equ_inclusion}) is not an equality. Hence, it is a natural question in which cases it can be an equality.
In this note we address this question in the special case, where the flag $Y_\bullet$ is obtained by divisors numerically equivalent to rational multiples of $L$. More precisely, we make the following definition.

\begin{Def}
	Let $X$ be any irreducible projective variety of dimension $d$, $Y_\bullet$ an admissible flag on $X$, and $L$ any $\mathbb{Q}$-line bundle on $X$. We say that $Y_\bullet$ \emph{corresponds to $L$} if for all $0\le i\le d-2$ the Weil divisor $Y_{i+1}\subseteq Y_{i}$ is a Cartier divisor and there are rational numbers $r_i\in\mathbb{Q}$ such that $r_i\mathcal{O}_{Y_i}(Y_{i+1})\equiv L|_{Y_i}$, where $\equiv$ denotes numerical equivalence.	
\end{Def}

We denote by $N^1(X)$ the group of numerical equivalence classes of line bundles on $X$ and $N^1(X)_{\mathbb{R}}$ for the real vector space induced by $N^1(X)$. We denote by $\mathrm{Amp}(X)$, $\mathrm{Big}(X)$ and $\overline{\mathrm{Eff}}(X)$ the convex cones in $N^1(X)_\mathbb{R}$ of ample, big and pseudo-effective $\mathbb{R}$-line bundles on $X$. Note that $\mathrm{Amp}(X)\subseteq \mathrm{Big}(X)\subseteq \overline{\mathrm{Eff}}(X)$, that $\mathrm{Amp}(X)$ and $\mathrm{Big}(X)$ are open, and that $\overline{\mathrm{Eff}}(X)$ is the closure of $\mathrm{Big}(X)$.

We will always work over an algebraically closed field of any characteristic and we use additive notation for line bundles on $X$.
For any two $\mathbb{R}$-line bundles $L,M$ we denote the convex cone
$$C_L(M)=\{\lambda L+\mu M~|~\lambda\in\mathbb{R},\mu\in\mathbb{R}_{\ge 0}\}\cap \mathrm{Amp}(X).$$
Our main result states that the Newton--Okounkov bodies $\Delta_{Y_\bullet}(\cdot)$ associated to an admissible flag $Y_\bullet$ corresponding to $L$ act additively on $C_L(M)$.
\begin{Thm}\label{thm_main}
	Let $X$ be any irreducible projective variety, $L$ and $M$ any $\mathbb{Q}$-line bundles on $X$, and $Y_\bullet$ an admissible flag on $X$ corresponding to $L$. Then for all $N_1,N_2\in C_L(M)$ we have
	$$\Delta_{Y_\bullet}(N_1+N_2)=\Delta_{Y_\bullet}(N_1)+\Delta_{Y_\bullet}(N_2).$$
\end{Thm}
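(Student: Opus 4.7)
The plan is to induct on $d=\dim X$, reducing at each step to the flag $Y_\bullet^{(1)}\colon Y_1\supsetneq\dots\supsetneq Y_d$ on $Y_1$ via the slice formula for Newton--Okounkov bodies. The case $d=1$ is immediate: $\Delta_{Y_\bullet}(\mathcal{N})=[0,\deg\mathcal{N}]$ and degree is linear. Throughout, the inclusion (\ref{equ_inclusion}) provides $\Delta_{Y_\bullet}(\mathcal{N}_1)+\Delta_{Y_\bullet}(\mathcal{N}_2)\subseteq\Delta_{Y_\bullet}(\mathcal{N}_1+\mathcal{N}_2)$ for free, so only the reverse inclusion requires work.

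Two compatibilities drive the induction. First, the restricted flag $Y_\bullet^{(1)}$ is admissible on $Y_1$, and since the defining relations $r_i\mathcal{O}_{Y_i}(Y_{i+1})\equiv\mathcal{L}|_{Y_i}$ for $i\ge 1$ carry over verbatim, it corresponds to $\mathcal{L}|_{Y_1}$. Second, the case $i=0$ of the definition gives $\mathcal{O}_X(Y_1)\equiv r_0^{-1}\mathcal{L}$, so for any $\mathcal{N}=\lambda\mathcal{L}+\mu\mathcal{M}\in C_\mathcal{L}(\mathcal{M})$ and any rational $t$ in the $\nu_1$-range,
$$\mathcal{N}|_{Y_1}-t\,Y_1|_{Y_1}\equiv(\lambda-t/r_0)\mathcal{L}|_{Y_1}+\mu\mathcal{M}|_{Y_1}$$
lies in the analogous cone $C_{\mathcal{L}|_{Y_1}}(\mathcal{M}|_{Y_1})$ on $Y_1$. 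Combining the slice formula $\Delta_{Y_\bullet}(\mathcal{N})_{\nu_1=t}=\Delta_{Y_\bullet^{(1)}}(\mathcal{N}|_{Y_1}-tY_1|_{Y_1})$ with the inductive hypothesis on $Y_1$ makes each individual slice Minkowski additive, provided the heights on the two sides match.

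Matching heights is where the two-dimensional hypothesis enters. I would show that the $\nu_1$-image of $\Delta_{Y_\bullet}(\mathcal{N})$ is $[0,\tau(\mathcal{N})]$, with $\tau(\mathcal{N})=\sup\{t\ge 0:\mathcal{N}-tY_1\text{ is big}\}$, and prove that $\tau$ is a linear, hence additive, functional on $C_\mathcal{L}(\mathcal{M})$: on the plane $\mathrm{span}(\mathcal{L},\mathcal{M})$ the boundary of $\overline{\mathrm{Eff}}(X)$ consists of rays through the origin, so the value of $t$ at which $\mathcal{N}-tY_1$ exits the pseudo-effective cone is a linear function of $\mathcal{N}$. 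Parametrising the slices of $\Delta_{Y_\bullet}(\mathcal{N}_1+\mathcal{N}_2)$ at height $t$ by the unique decomposition $t=t_1+t_2$ with $t_i=t\cdot\tau(\mathcal{N}_i)/\tau(\mathcal{N}_1+\mathcal{N}_2)$ and applying the slicewise additivity then gives the reverse inclusion.

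The main obstacle is executing the slice identification cleanly. The slice formula in general yields agreement between $\Delta_{Y_\bullet}(\mathcal{N})_{\nu_1=t}$ and a \emph{restricted} Newton--Okounkov body on $Y_1$, which equals $\Delta_{Y_\bullet^{(1)}}(\mathcal{N}|_{Y_1}-tY_1|_{Y_1})$ only under suitable positivity assumptions on $\mathcal{N}-tY_1$. Ampleness of $\mathcal{N}$ together with the flag correspondence should supply those assumptions for every $t$ in the interior of $[0,\tau(\mathcal{N})]$, with the boundary handled by taking closures. Managing the sign of $r_0$, the degenerate case $\mu=0$, and clearing denominators to work with rational $t$ is where most of the technical bookkeeping will live.
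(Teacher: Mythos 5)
Your high-level framework (induct on $\dim X$, reduce to slices, note that the restricted flag $Y_{1,\bullet}$ corresponds to $\mathcal{L}|_{Y_1}$, and observe that the general inclusion already handles one direction) matches the paper, and your remark that $\tau(\mathcal{N})=\mu(\mathcal{N};Y_1)$ is an affine function on the two-dimensional cone is correct. However, the proposal has a genuine gap at the decisive step: the choice of the decomposition $t=t_1+t_2$.

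You propose splitting proportionally, $t_i=t\cdot\tau(\mathcal{N}_i)/\tau(\mathcal{N}_1+\mathcal{N}_2)$, and then claim that ``ampleness of $\mathcal{N}$ together with the flag correspondence should supply'' the positivity needed to identify the restricted slice body $\Delta_{Y_\bullet|Y_1}(\mathcal{N}_i-t_i\mathcal{O}_X(Y_1))$ with the genuine Newton--Okounkov body $\Delta_{Y_{1,\bullet}}$ on $Y_1$. This is false in general: as $t\to\tau(\mathcal{N}_1+\mathcal{N}_2)$ the numbers $t_i$ approach $\tau(\mathcal{N}_i)$, so $\mathcal{N}_i-t_i\mathcal{O}_X(Y_1)$ approaches the pseudo-effective boundary and is typically big but not nef, let alone ample; then neither surjectivity of the restriction map nor the inductive hypothesis (which you only have for the ample cone on $Y_1$) is available. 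So proportional splitting by $\tau$ simply does not stay in the region where your slice identification is justified.

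The paper handles exactly this difficulty by using a \emph{non-uniform} decomposition governed by the threshold $t_0$ at which $\mathcal{N}_2-t_0\mathcal{O}_X(Y_1)$ becomes numerically proportional to $\mathcal{N}_1$ (after ordering so that $r\lambda_1/\mu_1\le r\lambda_2/\mu_2$). For $t<t_0$ one takes the lopsided split $t=0+t$; in this range $\mathcal{N}_1+\mathcal{N}_2-t\mathcal{O}_X(Y_1)$ and $\mathcal{N}_2-t\mathcal{O}_X(Y_1)$ are positive combinations of the ample $\mathcal{N}_1,\mathcal{N}_2$, hence ample, so the restricted bodies coincide with genuine bodies on $Y_1$ and the inductive hypothesis applies. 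For $t\ge t_0$ the paper does not decompose $t$ at all; instead it writes $\mathcal{N}_1+\mathcal{N}_2\equiv(1+\mu_2/\mu_1)\mathcal{N}_1+t_0\mathcal{O}_X(Y_1)$, shifts the slice by $t_0e_1$ using $e_1\in\Delta_{Y_\bullet}(\mathcal{O}_X(Y_1))$, and then only invokes the general inclusion $\Delta(\cdot)+\Delta(\cdot)\subseteq\Delta(\cdot+\cdot)$ --- no induction and no positivity of $\mathcal{N}_i-tY_1$ needed. Without this case split and the $e_1$ translation trick, the argument does not close, so you should replace the proportional splitting with the $t_0$-based strategy.
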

Note that the coefficient of $M$ for the classes in $C_L(M)$ must be non-negative. Consequently, if $L$ is ample, it necessarily lies on the boundary of $C_L(M)$. The theorem does not generalize to the scenario where $Y_\bullet$ is an admissible flag corresponding to an arbitrary class in $C_L(M)$. Indeed, if $Y_{\bullet}$ corresponds to an ample $\mathbb{Q}$-line bundle $N\in C_L(M)$, then $\Delta_{Y_\bullet}(N)$ is a simplex \cite[Proposition 4]{AKL14}. Thus, $\Delta_{Y_\bullet}(2N)$ is also a simplex. However, if $N\not\equiv M$ and both $N-\epsilon M$ and $N+\epsilon M$ are also ample $\mathbb{Q}$-line bundles, one can verify that $\Delta_{Y_\bullet}(N-\epsilon M)$ and $\Delta_{Y_\bullet}(N+\epsilon M)$ are in general not simplices. Since a simplex cannot be decomposed into a Minkowski sum of two non-simplices, the equality
$$\Delta_{Y_\bullet}(2N)=\Delta_{Y_\bullet}(N-\epsilon M)+\Delta_{Y_\bullet}(N+\epsilon M)$$
does not hold in general.

To prove the theorem, we will use the description of the slices of Newton--Okounkov bodies by Lazarsfeld--Mustaţă \cite{LM09} to construct the slices of the body on the left-hand side as sum of the slices of the bodies on the right-hand side, using induction on the dimension of $X$. This will give the reverse inclusion to (\ref{equ_inclusion}).
To show potentially applications of Theorem \ref{thm_main} let us give some examples of line bundles which admit corresponding admissible flags.

\begin{Exam}
	\begin{enumerate}[(i)]
		\item If $X$ is any irreducible projective variety and $L$ any ample line bundle on $X$, one can always construct an admissible flag $Y_\bullet$ corresponding to $L$ by Bertini's theorem. We will explain this construction in Section \ref{sec_linear-map}.
		\item Let $X$ be any irreducible projective surface and $Y$ any irreducible Cartier divisor on $X$. The flag $\{p\}\subsetneq Y\subsetneq X$ for any point $p$, regular in $Y$ and $X$, is an admissible flag corresponding to the line bundle $\mathcal{O}_X(Y)$.
		\item Let $X$ and $X'$ be two irreducible projective varieties of dimensions $d'=\dim X'$ and $d=\dim X$. We can always choose an admissible flag $Y'_{\bullet}$ on $X'$. Further, let $L$ be a line bundle on $X$ and $Y_\bullet$ an admissible flag corresponding to $L$. Then the flag
		$$X\times X'\supsetneq Y_1\times X'\supsetneq\dots\supsetneq Y_d\times X'\supsetneq Y_d\times Y'_1\supsetneq \dots\supsetneq Y_d\times Y'_{d'}$$
		is an admissible flag corresponding to the line bundle $\mathrm{pr}_1^*L$ on $X\times X'$, where $\mathrm{pr}_1\colon X\times X'\to X$ denotes the projection to the first factor. The values $r_i$ for this flag are the same as for the flag $Y_{\bullet}$ with the additional values $r_{d-1}=\deg(L|_{Y_{d-1}})$ and $r_i=0$ for $d\le i\le d+d'-2$.
		\item To give a more specific example, let $X$ be any irreducible projective curve. The previous two examples show that the line bundle $\mathcal{O}_{X^d}(\Delta_{jk})$ on the self-product $X^d$ associated to the diagonal divisor
		$$\Delta_{jk}=\{(x_1,\dots,x_d)\in X^d~|~x_j=x_k\}$$
		admits a corresponding admissible flag for $j\neq k$. If $j=1$ and $k=2$ one can give a corresponding admissible flag by
		$$X^d\supsetneq \Delta_{12}\times X^{d-2}\supsetneq \{(p,p)\}\times X^{d-2}\supsetneq \{(p,p,p)\}\times X^{d-3}\supsetneq\dots\supsetneq \{(p,\dots,p)\}$$
		for some regular point $p$ of $X$.
	\end{enumerate}
\end{Exam}

Previously, the additivity of Newton--Okounkov bodies has been studied only for special types of varieties. Łuszcz-Świdecka and Schmitz \cite{LS14} and Pokora, Schmitz and Urbinati \cite{PSU15} proved the existence of Minkowski bases for algebraic surfaces with rational polyhedral pseudo-effective cone respectively for smooth projective toric varieties. A Minkowski base is a finite set of pseudo-effective divisors $D_1,\dots,D_r$ such that any pseudo-effective divisor $D$ can be written as $D=\sum_{i=1}^r a_iD_i$ for some $a_i\ge 0$ and it holds $\Delta_{Y_\bullet}(D)=\sum_{i=1}^r a_i\Delta_{Y_\bullet}(D_i)$, where the $\Delta_{Y_\bullet}(D_i)$'s are indecomposable. Kiritchenko \cite{Kir17} studied the additivity of Newton--Okounkov bodies for Bott--Samelson varieties. The question of additivity has also been discussed for string polytopes of spherical varieties 
in relation to topics like toric degenerations by Alexeev and Brion \cite{AB04} and the description of cohomology rings by Kaveh \cite{Kav11}. Note that string polytopes of spherical varieties can be realized as Newton--Okounkov bodies \cite{Kav15}.

We would like to present Theorem \ref{thm_main} in a more systematic way. On $N^1(X)_\mathbb{Q}$ we have a multi-linear and symmetric intersection map
$$N^1(X)_\mathbb{Q}^d\to \mathbb{Q},\qquad (L_1,\dots,L_d)\mapsto L_1\cdot\ldots \cdot L_d$$
associating to a $d$-tuple of $\mathbb{Q}$-line bundles their intersection number.
On the other hand side, let us write $\mathcal{K}_d$ for the space of convex bodies in $\mathbb{R}^d$, which is a convex $\mathbb{R}_{\ge 0}$-cone. The Minkowski sum for convex bodies satisfies the cancellation rule, that is $K_1+K=K_2+K$ implies $K_1=K_2$. Thus, we can associate to $\mathcal{K}_d$ its Grothendieck group $\mathcal{K}_d^{\mathrm{vs}}$ which is the quotient of the set of formal differences $K_1-K_2$ by the relations $(K_1-K_2)\sim (K_3-K_4)$ if $K_1+K_4=K_2+K_3$. It naturally has the structure of an $\mathbb{R}$-vector space. For more details on this construction we refer to \cite{Est08} and for geometric interpretations we refer to \cite{Kho23, Sch18}.

By multi-linearity we can extend the mixed volume to a multi-linear and symmetric function
$$(\mathcal{K}_d^{\mathrm{vs}})^d\to \mathbb{R},\qquad (K_1,\dots,K_d)\mapsto V(K_1,\dots,K_d),$$
which sends any $d$-tuple $(K_1,\dots,K_d)$ of actual convex bodies to their mixed volume $V(K_1,\dots,K_d)$. As a consequence of Theorem \ref{thm_main} we get linear embeddings of two-dimensional subspaces of $N^1(X)_\mathbb{Q}$ into $\mathcal{K}_d^{\mathrm{vs}}$ respecting the above multi-linear forms.
\begin{Cor}\label{cor_main}
	Let $X$ be any irreducible projective variety of dimension $d$ and $U\subseteq N^1(X)_\mathbb{Q}$ a linear subspace of dimension $\dim U=2$ containing an ample class $L\in U$. Then there exists an injective linear map
	$$\Delta\colon U\to \mathcal{K}_d^{\mathrm{vs}},$$
	which is compatible with the intersection products in the sense that
	$$\tfrac{1}{d!} (M_1\cdot\ldots\cdot M_d)=V(\Delta(M_1),\dots,\Delta(M_d))$$
	for all $M_1,\dots,M_d\in U$. Moreover, for any $r\in\mathbb{N}$ and any ample $\mathbb{Q}$-line bundles $M_1,\dots,M_r\in U$ the map $\Delta$ can be chosen, such that $\Delta(M_j)=\Delta_{Y_\bullet}(M_j)$ is the Newton--Okounkov body associated to some fixed admissible flag $Y_\bullet$ for any $j\le r$.
\end{Cor}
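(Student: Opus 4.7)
The plan is to reduce Corollary \ref{cor_main} to Theorem \ref{thm_main} by choosing a basis of $U$ and an admissible flag adapted to the given data. Since $U\cap\mathrm{Amp}(X)$ is a $2$-dimensional open convex cone in $U_\mathbb{R}$ containing $\mathcal{L}$, and the rays $\mathbb{R}_{>0}\mathcal{M}_j$ are finitely many rays in its interior, the density of $\mathbb{Q}$-classes in the ample cone allows me to pick two linearly independent ample $\mathbb{Q}$-line bundles $\mathcal{L}_1,\mathcal{L}_2\in U$ such that every $\mathcal{M}_j$ admits an expansion $\mathcal{M}_j=a_j\mathcal{L}_1+b_j\mathcal{L}_2$ with $a_j,b_j\in\mathbb{Q}_{\ge 0}$. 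Using the Bertini construction of Example~(i), I then fix an admissible flag $Y_\bullet$ corresponding to $\mathcal{L}_1$, and define $\Delta\colon U\to\mathcal{K}_d^{\mathrm{vs}}$ by $\Delta(\mathcal{L}_i):=\Delta_{Y_\bullet}(\mathcal{L}_i)$ for $i=1,2$ together with $\mathbb{Q}$-linear extension.

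The identity $\Delta(\mathcal{M}_j)=\Delta_{Y_\bullet}(\mathcal{M}_j)$ then follows from Theorem \ref{thm_main}: when $a_j,b_j>0$, both $a_j\mathcal{L}_1$ and $b_j\mathcal{L}_2$ are ample and lie in $C_{\mathcal{L}_1}(\mathcal{L}_2)$, so the theorem together with the standard homogeneity $\Delta_{Y_\bullet}(\lambda\mathcal{L})=\lambda\Delta_{Y_\bullet}(\mathcal{L})$ for $\lambda\in\mathbb{Q}_{>0}$ yields $\Delta_{Y_\bullet}(\mathcal{M}_j)=a_j\Delta_{Y_\bullet}(\mathcal{L}_1)+b_j\Delta_{Y_\bullet}(\mathcal{L}_2)=\Delta(\mathcal{M}_j)$, the boundary cases $a_j=0$ or $b_j=0$ being immediate by homogeneity alone. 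For the intersection-product compatibility, both sides are symmetric multilinear in $(\mathcal{M}_1,\ldots,\mathcal{M}_d)$, so it suffices to verify the identity when each $\mathcal{M}_i\in\{\mathcal{L}_1,\mathcal{L}_2\}$. Equating coefficients of $a^kb^{d-k}$ in the polynomial identity $\tfrac{1}{d!}(a\mathcal{L}_1+b\mathcal{L}_2)^d=\mathrm{vol}(a\Delta_{Y_\bullet}(\mathcal{L}_1)+b\Delta_{Y_\bullet}(\mathcal{L}_2))$, which holds for $a,b\in\mathbb{Q}_{\ge 0}$ by the additivity just established and the volume formula $\mathrm{vol}(\Delta_{Y_\bullet}(\mathcal{L}))=\tfrac{1}{d!}\mathcal{L}^d$ for ample $\mathcal{L}$, then gives the required $V(\Delta_{Y_\bullet}(\mathcal{L}_1)^k,\Delta_{Y_\bullet}(\mathcal{L}_2)^{d-k})=\tfrac{1}{d!}\mathcal{L}_1^k\mathcal{L}_2^{d-k}$.

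The step I expect to be the main obstacle is injectivity. If $\Delta(\mathcal{N})=0$ for some $\mathcal{N}\in U$, the multilinearity of the mixed volume together with the intersection-product compatibility established above forces $\mathcal{N}\cdot\mathcal{M}_2\cdots\mathcal{M}_d=0$ for all $\mathcal{M}_2,\ldots,\mathcal{M}_d\in U$; specializing yields $\mathcal{N}\cdot\mathcal{L}_1^{d-1}=0$ and $\mathcal{N}^2\cdot\mathcal{L}_1^{d-2}=0$. The Hodge index theorem---namely the negative definiteness of the quadratic form $\alpha\mapsto\alpha^2\cdot\mathcal{L}_1^{d-2}$ on the hyperplane $\{\alpha\in N^1(X)_\mathbb{R}\,:\,\alpha\cdot\mathcal{L}_1^{d-1}=0\}$---then forces $\mathcal{N}\equiv 0$. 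The delicate aspect is that $X$ is only assumed to be an irreducible projective variety, so one has to invoke the Hodge index theorem in this generality, for instance by reducing to the surface case via repeated general hyperplane sections from $|m\mathcal{L}_1|$ for $m\gg 0$.
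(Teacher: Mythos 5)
Your proposal is correct, and except for the injectivity step it runs essentially parallel to the paper: pick a flag corresponding to an ample generator of the cone spanned (or nearly spanned) by the prescribed $\mathcal{M}_j$'s, define $\Delta$ by linear extension of $\Delta_{Y_\bullet}$ on that basis, use Theorem~\ref{thm_main} plus homogeneity to identify $\Delta$ with $\Delta_{Y_\bullet}$ on the positive cone, and then deduce the intersection-compatibility from the volume formula $\mathrm{vol}(\Delta_{Y_\bullet}(\mathcal{N}))=\tfrac{1}{d!}\mathcal{N}^d$ via polarization. The paper organizes the ``Moreover'' part slightly differently (it reorders the $\mathcal{M}_j$ so that $\mathcal{M}_1,\mathcal{M}_2$ already generate the cone, rather than picking fresh generators $\mathcal{L}_1,\mathcal{L}_2$ near the boundary), but that is a cosmetic choice.

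The genuine divergence is injectivity. The paper does not go through the Hodge index theorem at all: it reduces injectivity to showing that $\Delta(\mathcal{L})$ and $\Delta(\mathcal{M})$ are linearly independent in $\mathcal{K}_d^{\mathrm{vs}}$, and for this it quotes Cutkosky's \emph{strict} Brunn--Minkowski inequality \cite[Proposition 6.13]{Cut15}, namely $((\mathcal{L}+\mathcal{M})^d)^{1/d}\neq (\mathcal{L}^d)^{1/d}+(\mathcal{M}^d)^{1/d}$ for numerically independent ample classes on an arbitrary irreducible projective variety, which via (\ref{equ_intersection-volume}) becomes a strict Brunn--Minkowski inequality for the bodies $\Delta(\mathcal{L})$, $\Delta(\mathcal{M})$ and rules out any proportionality $s_1\Delta(\mathcal{L})=s_2\Delta(\mathcal{M})$. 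Your route instead derives $\mathcal{N}\cdot\mathcal{L}_1^{d-1}=0$ and $\mathcal{N}^2\cdot\mathcal{L}_1^{d-2}=0$ from $\Delta(\mathcal{N})=0$ and appeals to the Hodge index theorem. That works, but note that it is not really more elementary: working out the kernel condition with $\mathcal{N}=a\mathcal{L}_1+b\mathcal{L}_2$ shows that the only nontrivial case is exactly the equality case of the Khovanskii--Teissier inequality $(\mathcal{L}_1^{d-1}\cdot\mathcal{L}_2)^2\ge\mathcal{L}_1^d\cdot(\mathcal{L}_1^{d-2}\cdot\mathcal{L}_2^2)$, and showing that equality forces numerical proportionality on a possibly \emph{singular} $X$ over an arbitrary algebraically closed field is precisely the technical content of Cutkosky's result that the paper cites. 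So both arguments rest on the same underlying input; the paper simply accesses it through the strict Brunn--Minkowski formulation, which is more directly tailored to proving linear independence in $\mathcal{K}_d^{\mathrm{vs}}$. Your ``reduce to a surface via general members of $|m\mathcal{L}_1|$'' remark also glosses over the question of whether restriction $N^1(X)\to N^1(S)$ stays injective on $U$ when $X$ is singular; if you want to carry that step through cleanly, it is safer to phrase it as the Khovanskii--Teissier equality analysis rather than as a Lefschetz-type reduction.
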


The corollary allows us to translate inequalities of mixed volumes of general convex bodies to inequalities of intersection numbers of ample line bundles in the linear subspace $U$. As $\dim U=2$, it is enough to consider general inequalities of mixed volumes of only two independent convex bodies. However, it has been shown by Shepard \cite{She60} that all these inequalities are induced by the Alexandrov--Fenchel inequality \cite{Ale37,Fen36}, whose analogue is already known for the intersection numbers of ample line bundles, see for example \cite[Theorem 6.1]{Cut15}. Hence, it is a natural question, in which cases the Newton--Okounkov body acts additively on a bigger set than $C_L(M)$. The next theorem shows, that this only happens in special situations.

\begin{Thm}\label{thm-reverse}
	Let $X$ be any irreducible projective variety, $L$ and $M$ ample $\mathbb{R}$-line bundles on $X$, and $Y_\bullet$ an admissible flag on $X$, such that $Y_1$ is a Cartier divisor on $X$. If $$\Delta_{Y_\bullet}(L+M)=\Delta_{Y_\bullet}(L)+\Delta_{Y_\bullet}(M),$$
	then there exists a convex cone $C\subseteq \partial \overline{\mathrm{Eff}}(X)$ in the boundary of the pseudo-effective cone $\partial\overline{\mathrm{Eff}}(X)=\overline{\mathrm{Eff}}(X)\setminus \mathrm{Big}(X)$, such that
	$$L,M\in \{\lambda\mathcal{O}_X(Y_1)+N~|~\lambda\in\mathbb{R}_{>0},~N\in C\}.$$
\end{Thm} 
In other words, if $\Delta_{Y_\bullet}$ acts additively on $L$ and $M$, then the projections of $L$ and $M$ by $\mathcal{O}_X(Y_1)$ to the boundary of the pseudo-effective cone lie in a convex subcone contained in the boundary. As the pseudo-effective cone is in general not even polyhedral, one can not expect that there are such subcones in the boundary of dimension higher than $1$ in general. If the convex subcone in the boundary is of dimension $1$, then $L$ and $M$ coincide modulo $\mathcal{O}_X(Y_1)$ up to a multiple. This is exactly the situation of Theorem \ref{thm_main}. Thus, in general situations the choice of the cone $C_{L}(M)$ in Theorem \ref{thm_main} is optimal as a subcone of $\mathrm{Amp}(X)$.

It should be emphasized that all of the previous research on the additivity of Newton--Okounkov bodies, as mentioned above, has been restricted to varieties whose pseudo-effective cone is polyhedral. Indeed, toric varieties, Bott--Samelson varieties (\cite[Theorem A.2]{And19}, \cite[Corollary 1.3.2]{BCHM10}), and spherical varieties \cite[Theorem 4.1.1]{Per14} are all Mori dream spaces, and it is known that the pseudo-effective cone of any Mori dream space is polyhedral \cite[Proposition 1.11 (2)]{HK00}. A polyhedral pseudo-effective cone is particularly significant in these results. For instance, the divisors $D_i$ in the Minkowski bases discussed above correspond exactly to the facets of the pseudo-effective cone. In light of Theorem 1.5, one cannot expect the existence of Minkowski bases for general varieties.

Finally, we want to give an application of Theorem \ref{thm_main} to inequalities of intersection numbers, which we will deduce from an analogous inequality of mixed volumes of convex bodies by Lehmann--Xiao \cite{LX17}.
\begin{Cor}\label{cor_intersection}
	Let $X$ be any irreducible projective variety of dimension $d$ and $L$, $M$, and $N$ nef $\mathbb{R}$-line bundles on $X$. Then we have the following inequality of intersection numbers
	$$L^d\cdot (M\cdot N^{d-1})\le d\cdot (M\cdot L^{d-1})\cdot (L\cdot N^{d-1}).$$
\end{Cor}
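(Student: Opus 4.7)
My plan is to transport the analogous mixed-volume inequality of Lehmann--Xiao~\cite{LX17} for convex bodies to algebraic geometry through Newton--Okounkov bodies, choosing the admissible flag so that every substitution between mixed volumes and intersection numbers preserves the sign of the inequality.

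By continuity of intersection numbers and density of ample $\mathbb{Q}$-classes in the nef cone, I first reduce to the case where $\mathcal{L},\mathcal{M},\mathcal{N}$ are ample $\mathbb{Q}$-line bundles. The key step is then to pick, via Example~1.2(i), an admissible flag $Y_\bullet$ corresponding to $\mathcal{N}$, and \emph{not} to $\mathcal{L}$. Writing $K=\Delta_{Y_\bullet}(\mathcal{L})$, $M=\Delta_{Y_\bullet}(\mathcal{M})$, $N=\Delta_{Y_\bullet}(\mathcal{N})$, the Lehmann--Xiao inequality reads
\[
V(\underbrace{K,\ldots,K}_{d})\cdot V(M,\underbrace{N,\ldots,N}_{d-1})\le d\cdot V(M,\underbrace{K,\ldots,K}_{d-1})\cdot V(K,\underbrace{N,\ldots,N}_{d-1}),
\]
and it remains to identify each of the four mixed volumes with an intersection number or to bound it by one.

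Three of the four factors can be made explicit. By Lazarsfeld--Mustaţă, $V(K,\ldots,K)=\mathrm{vol}(K)=\tfrac{1}{d!}\mathcal{L}^d$. Moreover, since $Y_\bullet$ corresponds to the ample class $\mathcal{N}$, which lies in both two-dimensional subspaces $\mathrm{span}(\mathcal{L},\mathcal{N})$ and $\mathrm{span}(\mathcal{M},\mathcal{N})$, Corollary~\ref{cor_main} applied separately to each of them gives the exact equalities
\[
V(K,N,\ldots,N)=\tfrac{1}{d!}\mathcal{L}\cdot\mathcal{N}^{d-1},\qquad V(M,N,\ldots,N)=\tfrac{1}{d!}\mathcal{M}\cdot\mathcal{N}^{d-1}.
\]
For the last factor $V(M,K,\ldots,K)$ the flag need not correspond to any class of $\mathrm{span}(\mathcal{L},\mathcal{M})$, so the additivity theorem is unavailable; instead I rely on the bound $V(M,K,\ldots,K)\le\tfrac{1}{d!}\mathcal{M}\cdot\mathcal{L}^{d-1}$, which follows from the Minkowski inclusion~(\ref{equ_inclusion}) by differentiating at the origin the resulting volume comparison between $\mathrm{vol}(K+tM)$ and $\tfrac{1}{d!}(\mathcal{L}+t\mathcal{M})^d$ (these two polynomials in $t\ge 0$ already agree at $t=0$). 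Plugging the four expressions into Lehmann--Xiao and cancelling the common factor $(d!)^{-2}$ yields the desired inequality.

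The hard part is to keep the orientation of each substitution correct. The only non-equality $V(M,K,\ldots,K)\le\tfrac{1}{d!}\mathcal{M}\cdot\mathcal{L}^{d-1}$ lands on the \emph{right} of Lehmann--Xiao, so enlarging it only weakens the bound in the direction we want. Had the flag been chosen to correspond to $\mathcal{L}$ instead, an analogous inequality would have landed on the left and produced a bound of the wrong sign; it is precisely this asymmetry that forces the flag to correspond to $\mathcal{N}$.
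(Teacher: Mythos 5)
Your proof is correct and follows essentially the paper's strategy: apply the Lehmann--Xiao inequality with $k=1$ to Newton--Okounkov bodies, replace three of the four mixed volumes by intersection numbers using additivity, and bound the fourth from above --- noting, as you do, that this one-sided estimate must land on the right side of Lehmann--Xiao, which is why the flag may correspond to $\mathcal{M}$ or $\mathcal{N}$ but not to $\mathcal{L}$ (the paper picks $\mathcal{M}$, you pick $\mathcal{N}$). One minor citation point: the equalities $V(\Delta_{Y_\bullet}(\mathcal{L}),\Delta_{Y_\bullet}(\mathcal{N})^{d-1})=\tfrac{1}{d!}\,\mathcal{L}\cdot\mathcal{N}^{d-1}$, etc., for a flag you have \emph{already} fixed to correspond to $\mathcal{N}$ do not quite follow from the statement of Corollary~\ref{cor_main} (which only asserts existence of \emph{some} flag); they are precisely the equality case of Lemma~\ref{lem_mixed-volumes}, whose inequality case your differentiation-at-the-origin argument likewise reproves.
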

We remark that the much more general inequality
$$L^d\cdot (M^k\cdot N^{d-k})\le \tbinom{d}{k}\cdot (M^k\cdot L^{d-k})\cdot (L^k\cdot N^{d-k})$$
for any $0\le k\le d$ has recently been proved by Jiang and Li \cite{JL23} by a different strategy using so-called multipoint Okounkov bodies introduced by Trusiani \cite{Tru21}.
\section{Newton--Okounkov Bodies}\label{sec_okounkov}
	In this section, we recall the construction of Newton--Okounkov bodies and collect some relevant results. For further details, we refer to \cite{LM09}.
	Let $X$ be an irreducible projective variety of dimension $d$ over an algebraically closed field. The framework of \cite{LM09} applies in this general setting: it does not require smoothness or impose any condition on the characteristic of the base field. The only assumption is that the base field is uncountable. However, as noted in \cite[Footnote (5)]{LM09}, this condition is only used in a few instances, and is not needed for any of the results we cite in this section.
	 
	We choose a flag
	$$Y_{\bullet}:\quad X=Y_0\supsetneq Y_1\supsetneq \dots\supsetneq Y_{d-1}\supsetneq Y_d=\{p\}$$
	of irreducible subvarieties, which are regular at $p$. In the following we mean by $D$ always a Cartier divisor on $X$. We consider a valuation
	$$\nu_{Y_\bullet}\colon H^0(X,\mathcal{O}_X(D))\setminus\{0\}\to \mathbb{Z}^d,\quad s\to \nu_{Y_\bullet}(s)=(\nu_1(s),\dots,\nu_d(s)),$$
	which is defined as follows:\\
	We set $\nu_1=\nu_1(s)=\mathrm{ord}_{Y_1}(s)$, such that the restriction of $s$ induces a non-zero section $s_1\in H^0(Y_1,\mathcal{O}_{X}(D-\nu_1 Y_1)|_{Y_1})$. Inductively, we set $\nu_i=\nu_i(s)=\mathrm{ord}_{Y_i}(s_{i-1})$ and we write $s_i$ for the induced non-zero section 
	$$s_i\in H^0\left(Y_i, \mathcal{O}_{X}(D)|_{Y_i}\otimes \bigotimes_{k=1}^{i}\mathcal{O}_{Y_{k-1}}(-\nu_{k}Y_{k})|_{Y_i}\right).$$
	
	Now the Newton--Okounkov body of $D$ is defined by
	$$\Delta_{Y_\bullet}(D)=\mathrm{cch}\left(\bigcup_{m\ge 1}\tfrac{1}{m}\nu_{Y_\bullet}\left(H^0(X,\mathcal{O}_X(mD))\setminus\{0\}\right)\right)\subseteq \mathbb{R}^d,$$
	where $\mathrm{cch}$ stands for \emph{closed convex hull}. 
	The construction only depends on the linear equivalence class of $D$, so that we may simply write $\Delta_{Y_\bullet}(L)=\Delta_{Y_{\bullet}}(D)$ whenever $L\cong \mathcal{O}_X(D)$.
	As an example, if $X$ has dimension $1$, the Newton--Okounkov body of a big divisor $D$ on $X$ is just the line segment
	\begin{align*}
	\Delta_{Y_\bullet}(D)=[0,\deg D]\subseteq \mathbb{R},
	\end{align*}
	see \cite[Example 1.14]{LM09}.
	
	Let us recall some fundamental results about this construction from \cite{LM09}. First, for every big divisor it holds \cite[Theorem 2.3]{LM09}
	\begin{align}\label{equ_volume}
	\mathrm{vol}(\Delta_{Y_\bullet}(D))=\tfrac{1}{d!}\mathrm{vol}(D):=\lim_{m\to\infty}\frac{\dim H^0(X,\mathcal{O}_X(mD))}{m^d}.
	\end{align}
	Second, the Newton--Okounkov body $\Delta_{Y_\bullet}(D)$ only depends on the numerical equivalence class of $D$ if $D$ is big \cite[Proposition 4.1]{LM09}. For any integer $p>0$ and any big divisor $D$ the Newton--Okounkov body of $p\cdot D$
	\begin{align}\label{equ_factor}
	\Delta_{Y_\bullet}(p\cdot D)=p\Delta_{Y_\bullet}(D)
	\end{align}
	is just the scaling of the Newton--Okounkov body of $D$. 
	The proof of \cite[Proposition 4.1]{LM09} shows that Equation (\ref{equ_factor}) also holds if $D$ is any effective divisor.
	Hence, we may extend the definition of $\Delta_{Y_\bullet}$ to effective $\mathbb{Q}$-line bundles.
	It turns out that this extension is continuous on $\mathrm{Big}(X)$ \cite[Theorem B]{LM09}, such that we may define $\Delta_{Y_\bullet}$ also for big $\mathbb{R}$-divisors.
	
	In contrast to the compatibility with scalars in (\ref{equ_factor}), which by construction also holds for $p\in\mathbb{R}_{>0}$, the Newton--Okounkov body is in general not additive. But we obtain the weaker property that for any divisors $D_1$ and $D_2$ it always holds
	\begin{align}\label{equ_inclusion2}
	\Delta_{Y_\bullet}(D_1+D_2)\supseteq \Delta_{Y_\bullet}(D_1)+\Delta_{Y_\bullet}(D_2),
	\end{align}
	where the sum on the right-hand side denotes the Minkowski sum. This follows immediately from the construction, as for any sections $s_1\in H^0(X,\mathcal{O}_X(D_1))$ and $s_2\in H^0(X,\mathcal{O}_X(D_2))$ we obtain the section $s_1\otimes s_2\in H^0(X,\mathcal{O}_X(D_1+D_2))$, which induces the above inclusion.
	
	Finally, we recall a method to study the slices of Newton--Okounkov bodies.
	Let $E\subseteq X$ be an irreducible Cartier divisor. Further, let $M$ be a big line bundle satisfying $E\not \subseteq \mathbf{B}_+(M)$, where $\mathbf{B}_+(M)$ denotes the augmented base locus of $M$, see \cite[Section 2.4]{LM09} for its definition. It always holds $\mathbf{B}_+(M)=\emptyset$ if $M$ is ample.
	We set
	$$\mu(M;E)=\sup\{s>0~|~M-s\mathcal{O}_X(E)\in\mathrm{Big}(X)\}.$$
	We choose an admissible flag $Y_\bullet$, such that $Y_1=E$. By construction $\mu(M;E)$ coincides with the endpoint $\max\{t\ge 0~|~\Delta_{Y_\bullet}(M)_{\nu_1=t}\neq \emptyset\}$ of the Newton--Okounkov body $\Delta_{Y_\bullet}(M)$ after projecting to the first coordinate.
	For any big line bundle $N$ we define
	\begin{align}\label{equ_restriction}
	\Delta_{Y_\bullet|E}(N)=\mathrm{cch}\left(\bigcup_{m\ge 1}\tfrac{1}{m}\nu_{Y_{1,\bullet}}\left(\mathrm{Im}\left(H^0(X,N^{\otimes m})\to H^0(E,N^{\otimes m}|_E)\right)\setminus\{0\}\right)\right)
	\end{align}
	in $\mathbb{R}^{d-1}$, where the map denotes the restriction map and $Y_{1,\bullet}$ denotes the admissible flag $Y_1\supsetneq \dots\supsetneq Y_d$ on $E=Y_1$. 	
	Noting that 
	$$\Delta_{Y_\bullet|E}(pN)=p\Delta_{Y_\bullet|E}(N)$$
	for any $p\in\mathbb{Z}_{>0}$, we can canonically define $\Delta_{Y_\bullet|E}(N)$ for any big $\mathbb{Q}$-line bundle $N$.
	Then the slice of $\Delta_{\bullet}(M)$ at $\nu_1=t$ for any rational $0\le t<\mu(M;E)$ is given by
	\begin{align}\label{equ_slice}
	\Delta_{Y_\bullet}(M)_{\nu_1=t}=\{t\}\times\Delta_{Y_\bullet|E}(M-t\mathcal{O}_X(E)),
	\end{align}
	as proven in \cite[Theorem 4.26]{LM09}. 
	
	Note that the restriction map
	$$H^0(X,N^{\otimes m})\xrightarrow{\rho_m} H^0(E,N^{\otimes m}|_E)$$
	is surjective if $N$ is ample and $m$ is sufficiently large. Indeed, if we denote $\mathcal{I}_E$ for the ideal sheaf associated to the closed embedding $E\xrightarrow{\iota}X$, then we get a short exact sequence
	$$0\to \mathcal{I}_E\to \mathcal{O}_X\to \iota_*\mathcal{O}_E\to 0.$$
	Since tensoring with a locally free sheaf is exact, we get an exact sequence
	$$0\to \mathcal{I}_E\otimes_{\mathcal{O}_X}N^{\otimes m}\to N^{\otimes m}\to \iota_*\mathcal{O}_E\otimes_{\mathcal{O}_X}N^{\otimes m}\to 0.$$
	Note that $\iota_*\mathcal{O}_E\otimes_{\mathcal{O}_X}N^{\otimes m}\cong \iota_*\iota^*N^{\otimes m}\cong\iota_*(N^{\otimes m}|_E)$ by the projection formula. We obtain a long exact sequence of cohomology
	\begin{align*}
	0&\to H^0(X, \mathcal{I}_E\otimes_{\mathcal{O}_X}L^{\otimes m})\to H^0(X,L^{\otimes m})\\
	&\xrightarrow{\rho_m} H^0(X,L^{\otimes m}|_E)\to H^1(X,\mathcal{I}_E\otimes_{\mathcal{O}_X}L^{\otimes m})\to \dots.
	\end{align*}
	By Serre's vanishing theorem \cite[Theorem III.5.2]{Har77} there is an $m_0\in \mathbb{Z}$ depending on $X$, $L$, and $E$ such that $H^1(X,\mathcal{I}_E\otimes_{\mathcal{O}_X}L^{\otimes m})=0$ for all $m\ge m_0$. Hence, the restriction map $\rho_m$ is surjective for all sufficiently large $m$. In particular, we get
	\begin{align}\label{equ_nobody-restriction-ample}
		\Delta_{Y_\bullet|Y_1}(N)=\Delta_{Y_{1,\bullet}}(N|_{Y_1})
	\end{align}
	if $N$ is ample.
	\section{The Additivity of $\Delta_{Y_\bullet}$}\label{section_additivity}
	The aim of this section is to prove Theorem \ref{thm_main}.
		We will proceed by induction on the dimension $d=\dim X$ of $X$. If $d=1$ we get by (\ref{equ_volume})
		\begin{align*}
			&\mathrm{vol}(\Delta_{Y_{\bullet}}(N_1+N_2))=\deg(N_1+N_2)\\
			&=\deg N_1+\deg N_2=\mathrm{vol}(\Delta_{Y_{\bullet}}(N_1))+\mathrm{vol}(\Delta_{Y_{\bullet}}(N_2))
		\end{align*}	
		for any ample line bundles $N_1$ and $N_2$ on $X$. By $\mathbb{Z}$-linearity and by continuity this also holds for ample $\mathbb{R}$-line bundles.
		As the Newton--Okounkov bodies are line segments of the form $[0,c]$, we already obtain the equality in Theorem \ref{thm_main}.
		
		Now let $d>1$ and assume that the theorem holds for projective varieties of dimension $d-1$. Let $N_1,N_2\in C_{L}(M)$. Since we already have the inclusion (\ref{equ_inclusion2}), we only need to show
		\begin{align}\label{equ_inclusion-reversed}
			\Delta_{Y_\bullet}(N_1+N_2)\subseteq\Delta_{Y_\bullet}(N_1)+\Delta_{Y_\bullet}(N_2).
		\end{align}
		By continuity we can assume that $N_i=\lambda_iL+\mu_iM$ for some $\lambda_i,\mu_i\in \mathbb{Q}$ with $\mu_i>0$ for any $i=1,2$. We write $\mu=\mu(N_1+N_2,Y_1)$. Since both sides of (\ref{equ_inclusion-reversed}) are closed convex bodies, it is enough to show slice-wise
		\begin{align}\label{equ_inclusion-slicewise}
		\Delta_{Y_\bullet}(N_1+N_2)_{\nu_1=t}\subseteq\Delta_{Y_\bullet}(N_1)+\Delta_{Y_\bullet}(N_2)
		\end{align}
		for all $t\in \left(0,\mu\right)\cap\mathbb{Q}$, as $\left(0,\mu\right)\cap\mathbb{Q}$ is dense in the interval $\left[0,\mu\right]$ and the slices $\Delta_{Y_\bullet}(N_1+N_2)_{\nu_1=t}$ are empty for all $t\notin\left[0,\mu\right]$.
		By assumptions there is an $r\in\mathbb{Q}$ such that $r\mathcal{O}_X(Y_1)\equiv L$. After interchanging $N_1$ and $N_2$, we may assume
		$$\frac{r\lambda_1}{\mu_1}\le \frac{r\lambda_2}{\mu_2}.$$
		We set $t_0=r\lambda_2-\frac{\mu_2}{\mu_1} r\lambda_1\ge 0$.
		
		First we consider the case $t\ge t_0$. By (\ref{equ_slice}) we can compute the slice at $\nu_1=t$ by
		\begin{align}\label{equ_computation}
			\Delta_{Y_{\bullet}}(N_1+N_2)_{\nu_1=t}&=\Delta_{Y_\bullet}((1+\tfrac{\mu_2}{\mu_1})N_1+t_0\mathcal{O}_X(Y_1))_{\nu_1=t}\\
			&=		\{t\}\times\Delta_{Y_{\bullet}|Y_1}((1+\tfrac{\mu_2}{\mu_1})N_1-(t-t_0)\mathcal{O}_X(Y_1))\nonumber\\
			&=t_0 e_1+ \{t-t_0\}\times\Delta_{Y_{\bullet}|Y_{1}}((1+\tfrac{\mu_2}{\mu_1})N_1-(t-t_0) \mathcal{O}_X(Y_1))\nonumber\\
			&=t_0 e_1+\Delta_{Y_{\bullet}}((1+\tfrac{\mu_2}{\mu_1})N_1)_{\nu_1=t-t_0},\nonumber
		\end{align}
		where $e_1$ denotes the first vector of the standard basis of $\mathbb{R}^{d}$. Note that 
		$$e_1\in \Delta_{Y_{\bullet}}(\mathcal{O}_X(Y_1))$$
		since the canonical section of $\mathcal{O}_X(Y_1)$ vanishes of order $1$ at $Y_1$ and the line bundle $\mathcal{O}_X(Y_1)\otimes\mathcal{O}_{X}(-Y_1)\cong \mathcal{O}_{X}$ is trivial, such that
		$$H^0(Y_1,\mathcal{O}_X(Y_1)|_{Y_1}\otimes\mathcal{O}_{X}(-Y_1)|_{Y_1})\setminus\{0\}$$
		is non-empty and consists of non-zero constants, which have order $0$ restricted to any $Y_i$ for $i\ge 1$. Hence, the computation in (\ref{equ_computation}) implies
		$$\Delta_{Y_{\bullet}}(N_1+N_2)_{\nu_1=t}\subseteq \Delta_{Y_\bullet}(t_0\mathcal{O}_X(Y_1))+\Delta_{Y_\bullet}((1+\tfrac{\mu_2}{\mu_1})N_1).$$
		For the right-hand side we can compute
		\begin{align*}
			\Delta_{Y_\bullet}(t_0\mathcal{O}_X(Y_1))+\Delta_{Y_\bullet}((1+\tfrac{\mu_2}{\mu_1})N_1)
			&=\Delta_{Y_\bullet}(t_0\mathcal{O}_X(Y_1))+\Delta_{Y_\bullet}(\tfrac{\mu_2}{\mu_1}N_1)+\Delta_{Y_\bullet}(N_1)\\
			&\subseteq \Delta_{Y_\bullet}(t_0\mathcal{O}_X(Y_1)+\tfrac{\mu_2}{\mu_1}N_1)+\Delta_{Y_\bullet}(N_1)\\
			&=\Delta_{Y_\bullet}(N_2)+\Delta_{Y_\bullet}(N_1)
		\end{align*}
		Thus, we have shown inclusion (\ref{equ_inclusion-slicewise}) for $t\ge t_0$.
		
		Now we consider the case $t<t_0$. In particular, we have $t_0>0$ and hence, $r\neq 0$. We will apply the induction hypothesis to $Y_1$. Thus, we denote
		$$Y_{1,\bullet}\colon\quad Y_1\supsetneq Y_{2}\supsetneq\dots\supsetneq Y_{d-1}\supsetneq Y_d=\{p\}.$$
		for the restriction of the flag $Y_\bullet$ to $Y_1$.
		As $t<t_0$, the $\mathbb{Q}$-line bundle 
		$$N_1+N_2-\tfrac{t}{r}L=(\lambda_1+\lambda_2-\tfrac{t}{r})L+(\mu_1+\mu_2)M=(1+\tfrac{\mu_2}{\mu_1}\cdot\tfrac{t}{t_0})N_1+\tfrac{t_0-t}{t_0}N_2$$
		is ample by the ampleness of $N_1$ and $N_2$. It follows, that also $N_1+N_2-t\mathcal{O}_X(Y_1)$ is ample.
		Hence, we have
		\begin{align*}
			\Delta_{Y_{\bullet}|Y_{1}}(N_1+N_2-t\mathcal{O}_X(Y_1))&=\Delta_{Y_{1,\bullet}}(N_1|_{Y_1}+N_2|_{Y_1}-t\mathcal{O}_X(Y_1)|_{Y_1})\\
			&=\Delta_{Y_{1,\bullet}}((1+\tfrac{\mu_2}{\mu_1}\cdot\tfrac{t}{t_0})N_1|_{Y_1}+\tfrac{t_0-t}{t_0}N_2|_{Y_1}),
		\end{align*}
		where the first equality follows from Equation (\ref{equ_nobody-restriction-ample}).
		In a similar way, we also obtain
		$$\Delta_{Y_{\bullet}|Y_{1}}(N_2-t\mathcal{O}_X(Y_1))=\Delta_{Y_{1,\bullet}}(\tfrac{\mu_2}{\mu_1}\cdot\tfrac{t}{t_0}\cdot N_1|_{Y_1}+\tfrac{t_0-t}{t_0}N_2|_{Y_1})$$
		By the induction hypothesis we have
		$$\Delta_{Y_{1,\bullet}}((1+\tfrac{\mu_2}{\mu_1}\cdot\tfrac{t}{t_0})N_1|_{Y_1}+\tfrac{t_0-t}{t_0}N_2|_{Y_1})=\Delta_{Y_{1,\bullet}}(N_1|_{Y_1})+\Delta_{Y_{1,\bullet}}(\tfrac{\mu_2}{\mu_1}\cdot\tfrac{t}{t_0}\cdot N_1|_{Y_1}+\tfrac{t_0-t}{t_0}N_2|_{Y_1})$$
		Using the above observations and the slice formula (\ref{equ_slice}) we can finally compute
		\begin{align*}
			\Delta_{Y_\bullet}(N_1+N_2)_{\nu_1=t}=&\{t\}\times \Delta_{Y_\bullet|Y_1}(N_1+N_2-t\mathcal{O}_X(Y_1))\\
			=&\{t\}\times \Delta_{Y_{1,\bullet}}((1+\tfrac{\mu_2}{\mu_1}\cdot\tfrac{t}{t_0})N_1|_{Y_1}+\tfrac{t_0-t}{t_0}N_2|_{Y_1})\\
			=&\{t\}\times\left(\Delta_{Y_{1,\bullet}}(N_1|_{Y_1})+\Delta_{Y_{1,\bullet}}(\tfrac{\mu_2}{\mu_1}\cdot\tfrac{t}{t_0}\cdot N_1|_{Y_1}+\tfrac{t_0-t}{t_0}N_2|_{Y_1})\right)\\
			=&\{0\}\times\Delta_{Y_{1,\bullet}}(N_1|_{Y_1})+\{t\}\times \Delta_{Y_{1,\bullet}}(\tfrac{\mu_2}{\mu_1}\cdot\tfrac{t}{t_0}\cdot N_1|_{Y_1}+\tfrac{t_0-t}{t_0}N_2|_{Y_1})\\
			=&\{0\}\times\Delta_{Y_{\bullet}|Y_1}(N_1)+\{t\}\times\Delta_{Y_{\bullet}|Y_1}(N_2-t\mathcal{O}_X(Y_1))\\		
			=&\Delta_{Y_{\bullet}}(N_1)_{\nu_1=0}+\Delta_{Y_{\bullet}}(N_2)_{\nu_1=t},
		\end{align*}
		which implies the inclusion (\ref{equ_inclusion-slicewise}) for $t<t_0$. Thus, the proof of Theorem \ref{thm_main} is complete.
	\section{A Linear Map Compatible with Intersection Products}
	\label{sec_linear-map}
	In this section we deduce Corollary \ref{cor_main} from Theorem \ref{thm_main}. Let $L\in U$ be an ample $\mathbb{Q}$-line bundle. 
	We want to construct an admissible flag that corresponds to $L$. Let us show by induction that for every $0\le j\le d-1$ there is a flag of irreducible subvarieties $X=Y_0\supsetneq Y_1\supsetneq\dots\supsetneq Y_j$ such that for all $0\le i\le j-1$ it holds
	\begin{itemize}
		\item $Y_{i+1}$ is an irreducible Cartier divisor on $Y_{i}$,
		\item $Y_{i+1}\cap Y_{i,\mathrm{reg}}\subseteq Y_{i+1,\mathrm{reg}}$,
		\item there is an $r_i\in\mathbb{Q}$ with $r_i\mathcal{O}_{Y_i}(Y_{i+1})\cong L|_{Y_i}$ as $\mathbb{Q}$-line bundles, and
		\item $Y_j\cap X_{\mathrm{reg}}\neq \emptyset$.	
	\end{itemize}
	This is trivial for $j=0$ and we may assume that we have constructed such a flag for some $0\le j\le d-2$. We have to show that we can extend it by some $Y_{j+1}$ such that the properties are still valid for $0\le i\le j$ and $Y_{j+1}\cap X_{\mathrm{reg}}\neq \emptyset$.
	
	Since $L$ is ample, $L|_{Y_j}$ is ample, too. Thus, we can choose a positive integer $\widetilde{r}_{j}\in \mathbb{N}$ such that $\widetilde{r}_{j}L|_{Y_j}$ is very ample and hence, it induces an embedding $Y_{j}\to \mathbb{P}^N$. Since $\dim Y_{j}\ge 2$, we can use the following two versions of Bertini’s theorem: By \cite[Theorem 1.1]{FL81} any general
	hyperplane $H\subseteq \mathbb{P}^{N}$ intersects $Y_j$ in an irreducible Cartier divisor on $Y_j$. By \cite[Corollary 2]{CGM86} we have
	$(H\cap Y_j)_{\mathrm{reg}}\supseteq Y_{j,\mathrm{reg}}\cap H$ for any general hyperplane $H\subseteq\mathbb{P}^{N}$. Since every hyperplane $H\subseteq \mathbb{P}^{N}$ intersects $Y_j$, any general hyperplane $H\subseteq \mathbb{P}^{N}$ intersects the dense open subset $Y_j\cap X_{\mathrm{reg}}$ of $Y_j$.
	Hence, there is a hyperplane $H_1\subseteq \mathbb{P}^{N}$ such
	that $Y_{j+1}=H_1\cap Y_j$ is an irreducible Cartier divisor on $Y_j$, every regular point of $Y_j$ lying in $Y_{j+1}$ is also regular in $Y_{j+1}$, and $Y_{j+1}\cap X_{\mathrm{reg}}\neq \emptyset$. By construction we have $r_j\mathcal{O}_{Y_j}(Y_{j+1})\cong L|_{Y_j}$ for $r_j=\frac{1}{\widetilde{r}_j}\in \mathbb{Q}$. Thus, we have proved the induction hypothesis. 	
	
	To complete the flag $Y_0\supsetneq Y_1\supsetneq\dots\supsetneq Y_{d-1}$, we set $Y_d=\{p\}$ for any closed point $p\in Y_{d-1}\cap X_{\mathrm{reg}}$. By construction we get $p\in Y_{j,\mathrm{reg}}$ for all $0\le j\le d-1$ such that $Y_{\bullet}$ is an admissible flag and by the properties listed above it corresponds to $L$. A similar construction has been given by Anderson--Küronya--Lozovanu \cite[Proposition 4]{AKL14}.
		
	To continue with the prove of Corollary \ref{cor_main}, let $M\in U$ be linearly independent to $L$. Replacing $M$ by $M+ nL$ for some large enough $n\in \mathbb{N}$, we may assume that $M$ is also ample. Since $L$ and $M$ form a basis of $U$, we can write every $N\in U$ in the form $N=\lambda L+\mu M$ for some $\lambda,\mu\in \mathbb{Q}$. With this notation we define the map $\Delta\colon U\to\mathcal{K}_d^{\mathrm{vs}}$ by
	$$\Delta(N)=\lambda\Delta_{Y_{\bullet}}(L)+\mu\Delta_{Y_\bullet}(M).$$
	This map is clearly well-defined and linear. Since $L$ and $M$ are ample, Theorem \ref{thm_main} ensures that 
	\begin{align}\label{equ_linearmap}
	\Delta(\lambda L+\mu M)=\lambda\Delta_{Y_{\bullet}}(L)+\mu\Delta_{Y_\bullet}(M)=\Delta_{Y_\bullet}(\lambda L+\mu M)
	\end{align}
	for all $\lambda,\mu\ge 0$.
		
	Let us check the compatibility of $\Delta$ with intersection products.
	First, recall that the volume of an ample line bundle is its top degree intersection product, that means
	\begin{align*}
	\mathrm{vol}(\lambda L+\mu M)=(\lambda L+\mu M)^d
	\end{align*}
	for all $\lambda,\mu\ge 0$. Combining this with Equations (\ref{equ_volume}) and (\ref{equ_linearmap}) we obtain
	\begin{align}\label{equ_intersection-volume}
		\tfrac{1}{d!}(\lambda L+\mu M)^d=\mathrm{vol}(\Delta_{Y_\bullet}(\lambda L+\mu M))=\mathrm{vol}(\Delta(\lambda L+\mu M))
	\end{align}
	for all $\lambda,\mu\ge 0$.		
	 Further, by the polarization formula we get for the intersection number of line bundles $L_1,\dots,L_d$ as well as for the mixed volume of convex bodies $K_1,\dots,K_d$
	\begin{align*}
	L_1\cdot \ldots \cdot L_d&=\tfrac{1}{d!}\sum_{J\subseteq \{1,\dots,d\}}(-1)^{d-\#J}\left(\sum_{j\in J}L_j\right)^d,\\
	V(K_1,\dots,K_d)&=\tfrac{1}{d!}\sum_{J\subseteq \{1,\dots,d\}}(-1)^{d-\#J}\mathrm{vol}\left(\sum_{j\in J}K_j\right),
	\end{align*}
	where the second formula can also be taken as a definition for the mixed volume $V(K_1,\dots,K_d)$. If we apply Equation (\ref{equ_intersection-volume}) to every summand in the outer sum of these formulas, we obtain
	\begin{align}\label{equ_intersection_mixed}
	\tfrac{1}{d!}L^k\cdot M^{d-k}=V(\Delta(L)^k,\Delta(M)^{d-k})
	\end{align}
	for any $0\le k\le d$. Here, $V(K_1^k,K_2^{d-k})$ means $V(K_1,\dots,K_1,K_2,\dots,K_2)$ where the body $K_1$ occurs $k$-times and the body $K_2$ occurs $(d-k)$-times.
		
	To show the compatibility in general, we regard the intersection product and the mixed volume as linear maps
	$$\mathrm{int}\colon S^dU\to \mathbb{Q},\qquad V\colon S^d\mathcal{K}_d^{\mathrm{vs}}\to \mathbb{R},$$
	where $S^dW$ denotes the $d$-th symmetric power of any vector space $W$.
	As $S^d U$ is generated by symbolic elements of the form $L^kM^{d-k}$ for all $0\le k\le d$, the linear map $V\circ (S^d\Delta)\colon S^dU\to \mathbb{R}$ is completely determined by Equation (\ref{equ_intersection_mixed}). Thus, by linearity we indeed get
	$$\tfrac{1}{d!}(M_1\cdot\ldots\cdot M_d)=\tfrac{1}{d!}\mathrm{int}(M_1\cdots M_d)=V(\Delta(M_1),\dots,\Delta(M_d)).$$
		
	Next we show the injectivity of the map $\Delta$. It is enough to show that $\Delta(L)$ and $\Delta(M)$ are linearly independent in $\mathcal{K}_d^{\mathrm{vs}}$.
	Since $L$ and $M$ are linearly independent in $N^1(X)_\mathbb{R}$, it holds for their self-intersection numbers
	$$((L+M)^d)^{1/d}\neq (L^d)^{1/d}+(M^d)^{1/d},$$
	as worked out by Cutkosky \cite[Proposition 6.13]{Cut15} in this general situation.
	Using Equation (\ref{equ_intersection-volume}) we deduce a similar inequality for the volumes of the Newton--Okounkov bodies
	\begin{align}\label{equ_brunn_minkowski}
	\mathrm{vol}(\Delta(L)+\Delta(M))^{1/d}\neq \mathrm{vol}(\Delta(L))^{1/d}+\mathrm{vol}(\Delta(M))^{1/d}.
	\end{align}
	Note that it holds $\mathrm{vol}(\lambda K)^{1/d}=\lambda\mathrm{vol}(K)^{1/d}$ for all convex bodies $K$ and all $\lambda\in \mathbb{R}_{\ge 0}$. Hence, it follows from inequality (\ref{equ_brunn_minkowski}) that
	\begin{align}\label{equ_s1s2}
	s_1\Delta(L)\neq s_2\Delta(M)
	\end{align}
	for all $(s_1,s_2)\in(\mathbb{R}_{\ge 0})^2\setminus \{(0,0)\}$ and hence, also for $(s_1,s_2)\in(\mathbb{R}_{\le 0})^2\setminus \{(0,0)\}$. If $s_1,s_2\in \mathbb{R}$ have different signs, we also obtain (\ref{equ_s1s2}) as $s_1\Delta(L)-s_2\Delta(M)$ or $s_2\Delta(M)-s_1\Delta(L)$ is represented by a convex body of positive volume and hence non-zero. Thus, $\Delta(L)$ and $\Delta(M)$ are linearly independent.
		
	Finally, we consider ample line bundles $M_1,\dots,M_r\in U$ on $X$ and we want to show that the flag $Y_\bullet$ above can be chosen, such that $\Delta(M_j)=\Delta_{Y_\bullet}(M_j)$ for all $j\le r$. For this purpose, let $C\subseteq U$ denote the convex cone in $U$ consisting of all non-negative $\mathbb{Q}$-linear combinations of the $M_i$'s. After reordering we may assume that this cone is already generated by $M_1$ and $M_2$. We set $L=M_1$ and $M=M_2$ for the construction of the flag $Y_\bullet$ and the map $\Delta$ as above. If all $M_i$'s are multiples of each other, we choose $M$ instead to be any ample class in $U$ which is linearly independent of $L$. Then every $M_i$ can be represented in the form $M_i=\lambda_iL+\mu_iM$ for some $\lambda_i,\mu_i\in\mathbb{Q}_{\ge 0}$. As $Y_\bullet$ corresponds to $L$, we get by Theorem \ref{thm_main}
	$$\Delta(M_i)=\lambda_i\Delta_{Y_\bullet}(L)+\mu_i\Delta_{Y_\bullet}(M)=\Delta_{Y_\bullet}(\lambda_iL+\mu_iM)=\Delta_{Y_\bullet}(M_i)$$
	for all $i\le r$. This completes the proof Corollary \ref{cor_main}.
	
	\section{The Limits of the Additivity of $\Delta_{Y_\bullet}$}
	In this section we prove Theorem \ref{thm-reverse}. Let $L$ and $M$ be ample $\mathbb{R}$-line bundles on $X$ and $Y_\bullet$ an admissible flag on $X$, such that $Y_1$ is a Cartier divisor on $X$. We assume
	$$\Delta_{Y_\bullet}(L+M)=\Delta_{Y_\bullet}(L)+\Delta_{Y_\bullet}(M).$$
	We write $\mu_L=\mu(L,Y_1)$, $\mu_M=\mu(M,Y_1)$ and $\mu_{\Sigma}=\mu(L+M,Y_1)$ for the function $\mu$ defined in Section \ref{sec_okounkov}. The additivity of the Newton--Okounkov bodies assumed above implies
	$$\mu_{\Sigma}=\mu_{L}+\mu_{M}.$$
	Note, that the function $\mu$ was defined such that the $\mathbb{R}$-line bundles
	$$L^\circ=L-\mu_{L}\mathcal{O}_X(Y_1),\qquad M^\circ=M-\mu_{M}\mathcal{O}_X(Y_1)$$
	lie in the boundary $\partial\overline{\mathrm{Eff}}(X)$ of the pseudo-effective cone of $X$.
	To prove Theorem \ref{thm-reverse}, it is enough to show that the convex cone
	$$C=\{\lambda L^\circ+\mu M^\circ\in \overline{\mathrm{Eff}}(X)~|~\lambda,\mu\in\mathbb{R}_{\ge0}\}$$
	lies completely in the boundary $\partial\overline{\mathrm{Eff}}(X)$.
	Assume that there are $\lambda,\mu\in\mathbb{R}_{\ge 0}$ such that $\lambda L^\circ+\mu M^\circ\notin \partial\overline{\mathrm{Eff}}(X)$. Then there exists some $\epsilon>0$ such that 
	$$\lambda L^\circ+\mu M^\circ-\epsilon\mathcal{O}_X(Y_1)\in \overline{\mathrm{Eff}}(X).$$
	Without loss of generality we may assume $\lambda<\mu$, in particular $\mu>0$.
	Since $\overline{\mathrm{Eff}}(X)$ is a convex cone, we also get that 
	\begin{align*}
	\tfrac{1}{\mu}(\lambda L^\circ+\mu M^\circ-\epsilon\mathcal{O}_X(Y_1))+\tfrac{\mu-\lambda}{\mu}L^\circ=L+M-(\mu_{\Sigma}+\tfrac{\epsilon}{\mu})\mathcal{O}_X(Y_1)
	\end{align*}
	lies in $\overline{\mathrm{Eff}}(X)$ in contradiction to the definition of $\mu_{{\Sigma}}$. Thus, we get $C\subseteq \partial\overline{\mathrm{Eff}}(X)$ as desired.
	\section{An Inequality of Intersection Numbers}
	Finally, we give the proof of Corollary \ref{cor_intersection} in this section. It has been worked out by 	Lehmann--Xiao \cite[Theorem 5.9]{LX17}, that for all three convex bodies $K,L$ and $M$ in $\mathbb{R}^d$ we always have
	$$\mathrm{vol}(L)V(K^k,M^{d-k})\le\tbinom{d}{k}V(K^k,L^{d-k})V(L^k,M^{d-k})$$
	for any $0\le k\le d$. If we apply this for $k=1$ to the Newton--Okounkov bodies of three ample line bundles $L,M$ and $N$, we get
	\begin{align}\label{equ_inequality-mixed-okounkov}
	\tfrac{1}{d!}L^d  V(\Delta_{Y_\bullet}(M),\Delta_{Y_\bullet}(N)^{d-1})\le d V(\Delta_{Y_\bullet}(M),\Delta_{Y_\bullet}(L)^{d-1}) V(\Delta_{Y_\bullet}(L),\Delta_{Y_\bullet}(N)^{d-1})
	\end{align}
	for any admissible flag $Y_\bullet$ on $X$. To compare the mixed volumes of Newton--Okounkov bodies with the intersection numbers of ample line bundles, we prove the following lemma.
	\begin{Lem}\label{lem_mixed-volumes}
		Let $X$ be a projective variety of dimension $d$ and $L$ and $M$ any two ample line bundles on $X$. For any admissible flag $Y_\bullet$ it holds 
		$$V(\Delta_{Y_\bullet}(L),\Delta_{Y_\bullet}(M)^{d-1})\le\tfrac{1}{d!} (L\cdot M^{d-1}).$$
		If $Y_{\bullet}$ is corresponding to $L$ or to $M$, then the above inequality is an equality.
	\end{Lem}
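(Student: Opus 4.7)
The plan is to compare two homogeneous polynomials of degree $d$ in nonnegative variables $s,t$, namely
$$P(s,t)=\mathrm{vol}\bigl(s\Delta_{Y_\bullet}(\mathcal{L})+t\Delta_{Y_\bullet}(\mathcal{M})\bigr)\quad\text{and}\quad Q(s,t)=\tfrac{1}{d!}(s\mathcal{L}+t\mathcal{M})^d.$$
Expanding by the definition of mixed volumes and by multilinearity of the intersection product,
$$P(s,t)=\sum_{k=0}^d\tbinom{d}{k}V\bigl(\Delta_{Y_\bullet}(\mathcal{L})^k,\Delta_{Y_\bullet}(\mathcal{M})^{d-k}\bigr)\,s^k t^{d-k},$$
and $Q$ has the analogous expansion with coefficients $\tfrac{1}{d!}\mathcal{L}^k\cdot\mathcal{M}^{d-k}$. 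The lemma therefore amounts to an inequality, respectively equality, between the coefficients of the single monomial $s\,t^{d-1}$.

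First I would establish $P(s,t)\le Q(s,t)$ for all real $s,t\ge 0$. For rational $s,t\ge 0$ the scaling property (\ref{equ_factor}) together with the Minkowski inclusion (\ref{equ_inclusion2}) gives
$$s\Delta_{Y_\bullet}(\mathcal{L})+t\Delta_{Y_\bullet}(\mathcal{M})=\Delta_{Y_\bullet}(s\mathcal{L})+\Delta_{Y_\bullet}(t\mathcal{M})\subseteq \Delta_{Y_\bullet}(s\mathcal{L}+t\mathcal{M}),$$
and passing to volumes and using (\ref{equ_volume}) yields $P(s,t)\le Q(s,t)$; continuity extends this to all real $s,t\ge 0$. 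The crucial observation is that $P$ and $Q$ already agree on the axis $s=0$: by (\ref{equ_volume}) one has $P(0,t)=t^d\,\mathrm{vol}(\Delta_{Y_\bullet}(\mathcal{M}))=\tfrac{t^d}{d!}\mathcal{M}^d=Q(0,t)$. Thus $Q-P$ is a nonnegative polynomial on the closed quadrant that vanishes identically along $\{s=0\}$, so its partial derivative in $s$ at $s=0$ is itself nonnegative on $\{t\ge 0\}$; reading off the coefficient of $t^{d-1}$ in
$$d\,V\bigl(\Delta_{Y_\bullet}(\mathcal{L}),\Delta_{Y_\bullet}(\mathcal{M})^{d-1}\bigr)\,t^{d-1}=\partial_s P(0,t)\le \partial_s Q(0,t)=\tfrac{d}{d!}(\mathcal{L}\cdot\mathcal{M}^{d-1})\,t^{d-1}$$
yields the claimed inequality.

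For the equality statement, suppose $Y_\bullet$ corresponds to $\mathcal{L}$. For any rational $s,t>0$ the $\mathbb{Q}$-line bundles $s\mathcal{L}$ and $t\mathcal{M}$ are both ample, hence lie in $C_\mathcal{L}(\mathcal{M})$, and Theorem \ref{thm_main} gives
$$\Delta_{Y_\bullet}(s\mathcal{L}+t\mathcal{M})=s\Delta_{Y_\bullet}(\mathcal{L})+t\Delta_{Y_\bullet}(\mathcal{M}),$$
so $P\equiv Q$ on the open quadrant and all coefficients of the two polynomials coincide, giving equality in particular for $s\,t^{d-1}$. If instead $Y_\bullet$ corresponds to $\mathcal{M}$, the same argument applies after swapping the roles of $\mathcal{L}$ and $\mathcal{M}$ (using $C_\mathcal{M}(\mathcal{L})$ and symmetry of the mixed volume and intersection product). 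The only slightly delicate step is the coefficient extraction from the pointwise polynomial inequality $P\le Q$: it succeeds precisely because $P$ and $Q$ already match on the axis $s=0$, forcing the first-order terms in $s$ to respect the inequality. No analogous control is available for the higher mixed coefficients $V(\Delta_{Y_\bullet}(\mathcal{L})^k,\Delta_{Y_\bullet}(\mathcal{M})^{d-k})$ with $k\ge 2$, but the lemma concerns only the $k=1$ term, so this limitation is immaterial.
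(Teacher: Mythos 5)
Your proof is correct and takes essentially the same approach as the paper: the paper considers the single-variable function $t\mapsto \mathrm{vol}(t\Delta_{Y_\bullet}(\mathcal{L})+\Delta_{Y_\bullet}(\mathcal{M}))$, which is exactly your $P(t,1)$, and compares its right-hand derivative at $t=0$ against that of $\tfrac{1}{d!}(t\mathcal{L}+\mathcal{M})^d$, using the same three ingredients (the Minkowski inclusion, equality of the two functions at $t=0$, and Theorem \ref{thm_main} for the equality case). Your two-variable homogeneous-polynomial formulation is a clean equivalent rephrasing, but the underlying argument is identical.
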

	\begin{proof}
		Note, that the mixed volume is equal to the first derivative of the volume function at $t=0$ divided by $d$
		\begin{align}\label{equ_mixed-volume-derivative}
		V(\Delta_{Y_\bullet}(L),\Delta_{Y_\bullet}(M)^{d-1})=\frac{1}{d}\cdot\frac{d\mathrm{vol}(t\Delta_{Y_\bullet}(L)+\Delta_{Y_\bullet}(M))}{dt}|_{t=0}.
		\end{align}
		Here and in the following we always mean the derivative for which $t$ approximates $0$ in the positive rationals.
		For any $t\ge 0$ the inclusion (\ref{equ_inclusion2}) implies the inequality
		$$\mathrm{vol}(t\Delta_{Y_\bullet}(L)+\Delta_{Y_\bullet}(M))\le \mathrm{vol}(\Delta_{Y_\bullet}(tL+M))=\tfrac{1}{d!}(tL+M)^d,$$
		which is an equality at $t=0$. Thus the first derivative of the left-hand side has to be bounded by the first derivative of the right-hand side at $t=0$
		$$\frac{d\mathrm{vol}(t\Delta_{Y_\bullet}(L)+\Delta_{Y_\bullet}(M))}{dt}|_{t=0}\le \frac{1}{d!}\cdot\frac{d(tL+M)^d}{dt}|_{t=0}=\tfrac{d}{d!}(L\cdot M^{d-1}).$$
		Applying this inequality to Equation (\ref{equ_mixed-volume-derivative}) we get the inequality in the lemma. If $Y_\bullet$ corresponds to $L$ or $M$, we get by Theorem \ref{thm_main} an equality
		$$\mathrm{vol}(t\Delta_{Y_\bullet}(L)+\Delta_{Y_\bullet}(M))= \mathrm{vol}(\Delta_{Y_\bullet}(tL+M))=\tfrac{1}{d!}(tL+M)^d.$$
		Thus, both sides must have the same derivative at $t=0$ as functions in $t$, such that we obtain an equality 
		$$V(\Delta_{Y_\bullet}(L),\Delta_{Y_\bullet}(M)^{d-1})=\tfrac{1}{d!} (L\cdot M^{d-1}).$$
		This completes the proof of the lemma.
	\end{proof}
	We choose $Y_\bullet$ in inequality (\ref{equ_inequality-mixed-okounkov}) to be a flag corresponding to $M$. Such a flag can be constructed in the same way as described in Section \ref{sec_linear-map}. With this choice for $Y_\bullet$ we can apply Lemma \ref{lem_mixed-volumes} to both sides of inequality (\ref{equ_inequality-mixed-okounkov}). This yields
	$$L^d\cdot (M\cdot N^{d-1})\le d\cdot (M\cdot L^{d-1})\cdot(L\cdot N^{d-1}).$$
	By continuity of the intersection numbers, this inequality still holds true if $L,M$ and $N$ are any nef $\mathbb{R}$-line bundles on $X$. This finishes the proof of Corollary \ref{cor_intersection}.

\end{document}